\theoremstyle{plain}
\newtheorem{thm}[subsection]{Theorem}
\newtheorem{lem}[subsection]{Lemma}
\newtheorem{prop}[subsection]{Proposition}
\newtheorem{cor}[subsection]{Corollary}
\newtheorem{question}{Question}
\newtheorem{conj}[subsection]{Conjecture}
\newcommand{\card}[1]{\ensuremath{|#1|}}
\providecommand{\keywords}[1]{\textbf{\textit{Keywords:}} #1}
\providecommand{\classification}[1]{\textbf{\textit{AMS subject classification:}} #1}
\theoremstyle{definition}
\newtheorem{rem}[subsection]{Remark}
\newtheorem{defn}[subsection]{Definition}
\def\a{\alpha}
\def\b{\beta}
\def\lesc{\preccurlyeq}
\def\a{\alpha}
\def\b{\beta}
\title{Beck's Conjecture for Power Graphs}
\author{Priya Das\footnote{priya.math88@gmail.com}, ~ Himadri Mukherjee \footnote{himadri@iiserkol.ac.in} \,\footnote{ The authors thank Mr. Suman Bandyopadhyay for many helpful discussions and suggestions.}}
\date{}
\begin{document}

\maketitle
\setlength{\parindent}{0pt}

\begin{abstract} Beck's conjecture on coloring of graphs associated to various algebraic objects has generated considerable interest in the community of discrete mathematics and combinatorics since its inception in the year 1988. The version of this conjecture for power-graphs of finite groups has been addressed and partially settled by previous authors. In this paper we answer it in the affirmative in complete generality, and, in effect, we establish a ``nicer'' statement on a larger class of graphs. We also clear up certain ambiguities present in the way the previous versions of the conjecture were posed.

\end{abstract}
\keywords{Graph theory, Power graph, Maximum clique, Coloring, Chromatic numbers, Beck's conjecture, Berge graph}\\
\classification{05C25 Primary, 05C17 Secondary}

\section{Introduction} Zero divisor graph of a commutative ring was introduced in the year 1988 by I. Beck in the article \cite{beck}, in which he related interesting commutative algebraic notions to coloring and clique numbers of the zero divisor graph. Beck conjectured that the chromatic number of the zero divisor graph of a commutative ring is same as its clique number. He called a ring with this property a chromatic ring. So the conjecture can be restated as: every commutative ring is a chromatic ring. Beck's conjecture for chromatic rings has been disproved in general. A minimal example of a non-chromatic ring was given in the article\cite{counter}, and a series of counter-examples have appeared since (viz. \cite{bhat}). The question of classifying all the chromatic rings till date remains open. The reader might find the following articles interesting: \cite{survey_beck} for the history of the zero divisor graphs, and \cite{maimani,shaban} for review of known results. The question has been generalized to many other areas of graph theory where a graph is attached to an algebraic object.

On the other hand the power graph of a group, which is a graph structure defined on the set of points of the group by declaring an edge between two elements if one is contained in the cyclic subgroup generated by the other, was introduced in the paper \cite{kelarev}. A good number of exciting results have appeared in\cite{cameron, cameron2, ivy,survey}. The paper\cite{ashrafi} first addressed the problem of coloring in power-graphs and the equality of chromatic number with the clique number.

The authors of \cite{ashrafi} establish a couple of theorems in this direction. The \emph{exponent} of a finite group is the least common multiple of the orders of the elements of the group, and a \emph{full exponent group} is a group $G$ in which there is an element $g$ such that $o(g)$ equals the exponent.
\begin{thm}\cite[theorem 3]{ashrafi}\label{ashrafi}
 Let $G$ be a full exponent group, with exponent $p_1^{\b_1}p_2^{b_2}\ldots p_r^{\b_r}$ and $p_1<p_2\ldots<p_r$ then: \[\omega(G)=\chi(G)=p_r^{\b_r}+\displaystyle{\sum_{j=0}^{r-2} (p_{r-j-1}^{\b_{r-j-1}}-1)\prod_{i=0}^{j} \phi(p_{r-i}^{\b_{r-i}})}.\]
\end{thm}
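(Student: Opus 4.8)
The plan is to prove the two equalities separately, the computation of $\omega(G)$ being the combinatorial heart and $\chi(G)=\omega(G)$ coming from a structural observation about power graphs.

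\emph{Reducing the clique number to a cyclic group.} I would first note that every clique of the power graph lies inside a single cyclic subgroup: if $C$ is a clique and $x\in C$ has maximal order among its members, then adjacency gives, for each $y\in C$, either $x\in\langle y\rangle$ or $y\in\langle x\rangle$, and in the first case $o(x)\mid o(y)$ forces $\langle x\rangle=\langle y\rangle$; so $y\in\langle x\rangle$ either way, i.e.\ $C\subseteq\langle x\rangle$. Hence $\omega(G)=\max_{x\in G}\omega(\mathcal{P}(\langle x\rangle))$. Since $\mathcal{P}(C_d)$ is the induced subgraph of $\mathcal{P}(C_{d'})$ on the unique subgroup of order $d$ whenever $d\mid d'$, the map $d\mapsto\omega(\mathcal{P}(C_d))$ is monotone for divisibility; as $G$ has full exponent, every element order divides $e=p_1^{\beta_1}\cdots p_r^{\beta_r}$ and $e$ is attained, so $\omega(G)=\omega(\mathcal{P}(C_e))$. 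Finally, the cyclic subgroups generated by the members of any clique of $\mathcal{P}(C_e)$ form a chain, and taking all generators of each subgroup in a maximal chain $1=H_0\subsetneq\cdots\subsetneq H_m=C_e$ produces a maximum clique; thus $\omega(\mathcal{P}(C_e))=\max_w S(w)$, where $w=w_1\cdots w_m$ ranges over the shuffles of the multiset in which $p_k$ occurs $\beta_k$ times (so $|H_i|=w_1\cdots w_i$) and $S(w)=\sum_{i=0}^m\phi(w_1\cdots w_i)$.

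\emph{Optimising the word --- the main obstacle.} I expect the real work to be here. The key lemma is that some optimal $w$ begins with all $\beta_r$ copies of the largest prime $p_r$. Swapping two adjacent letters, $u$ at position $k$ and $v$ at position $k+1$, replaces exactly one term of $S(w)$, namely $\phi(Pu)$ by $\phi(Pv)$ with $P=w_1\cdots w_{k-1}$ (all other partial products are unaffected since $Puv=Pvu$). So sliding the first copy of $p_r$ leftwards past a smaller prime $b$ only helps: there $p_r\nmid P$, hence $\phi(Pb)\le b\,\phi(P)\le(p_r-1)\phi(P)=\phi(Pp_r)$; and sliding each subsequent copy of $p_r$ to the front of the growing $p_r$-block helps because there $p_r\mid P$, hence $\phi(Pb)\le b\,\phi(P)<p_r\,\phi(P)=\phi(Pp_r)$. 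Once the prefix $p_r^{\beta_r}$ is fixed, multiplicativity of $\phi$ together with $\sum_{j=0}^{a}\phi(p^j)=p^a$ yields $S(w)=p_r^{\beta_r}+\phi(p_r^{\beta_r})\bigl(S'(w')-1\bigr)$, where $w'$ is the rest of $w$ and $S'$ is the same functional for $C_{e'}$, $e'=p_1^{\beta_1}\cdots p_{r-1}^{\beta_{r-1}}$; hence $\omega(\mathcal{P}(C_e))=p_r^{\beta_r}+\phi(p_r^{\beta_r})\bigl(\omega(\mathcal{P}(C_{e'}))-1\bigr)$, and induction on $r$ unwinds this recursion to exactly the stated closed form. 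The delicate --- and, I suspect, historically ambiguous --- point is that the optimal order is dictated by the primes $p_k$, not by the sizes of the prime powers $p_k^{\beta_k}$: it is precisely the inequality $p_r-1\ge b$ for all smaller primes $b$ that makes ``largest prime first'' correct, while ``largest prime power first'' is in general wrong.

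\emph{The chromatic number.} Since $\chi\ge\omega$ always, it remains to colour $\mathcal{P}(G)$ with $\omega(G)$ colours. The clean route is structural: $\mathcal{P}(G)$ is precisely the comparability graph of the preorder on $G$ given by $x\preceq y\iff\langle x\rangle\subseteq\langle y\rangle$; equivalently, it is obtained from the comparability graph of the poset of cyclic subgroups of $G$ (ordered by inclusion) by substituting for each subgroup $H$ a complete graph on its $\phi(|H|)$ generators. Comparability graphs of posets are perfect, and perfection survives substitution of perfect graphs for vertices, so $\mathcal{P}(G)$ is perfect; in particular $\chi(G)=\omega(G)$, which with the reductions above completes the proof. (Alternatively, one can colour by hand: fix a maximal chain of cyclic subgroups attaining $\omega(G)$ and assign the generators at its successive levels disjoint colour classes of sizes $\phi(|H_0|),\phi(|H_1|),\dots$, then verify this extends to a proper colouring of $G$ using at most $\max_{\text{chains}}\sum_i\phi(|H_i|)=\omega(G)$ colours.)
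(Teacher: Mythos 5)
Your proposal is correct, and it is worth noting that the paper does not actually re-prove this cited theorem: it quotes it from the earlier reference and then supplies the two ingredients separately (Proposition \ref{computation} with the recurrence $\Psi(n)=\phi(n)+\max_{p\mid n}\Psi(n/p)$ and its closed form, stated without derivation, for the first equality; and the Berge-graph argument of Section 3 plus the strong perfect graph theorem for the second). Your route differs usefully at both points. For the clique number, your reduction to a single cyclic subgroup via a maximal-order element, the identification of cliques of $\mathcal{P}(C_e)$ with chains of subgroups, and above all the adjacent-transposition argument showing that an optimal shuffle puts all copies of the largest prime first, constitute exactly the optimization step that the paper (and, implicitly, the recurrence $\max_{p\mid n}\Psi(n/p)$) leaves unresolved; your observation that the correct ordering is governed by the primes rather than the prime powers is precisely the point that makes the closed form come out right, and the induction unwinding $S(w)=p_r^{\beta_r}+\phi(p_r^{\beta_r})(S'(w')-1)$ does match the stated formula. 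For the chromatic number, the paper proves that the underlying undirected graph of any preorder has no odd holes or antiholes and then invokes the strong perfect graph theorem, whereas you observe that the power graph is the clique blow-up of the comparability graph of the poset of cyclic subgroups and appeal to the classical facts that comparability graphs are perfect and that perfection is preserved under substitution; this is genuinely more elementary (one could even note that the underlying graph of a preorder is itself a comparability graph of a poset, by linearly ordering each equivalence class, which makes the substitution lemma unnecessary). The trade-off is that the paper's Berge argument is self-contained at the level of preordered sets and feeds directly into its more general Theorem \ref{berge}, while your argument avoids the heavy machinery of the strong perfect graph theorem entirely and fully justifies the numerical formula, which the paper only sketches.
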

\cite[Theorem 2]{ashrafi} deals with the same result for a cyclic group, and the above result is inferred by appealing to similarity. For the cyclic group case, the main focus has been to show $\omega(G)=p_r^{\b_r}+\displaystyle{\sum_{j=0}^{r-2} (p_{r-j-1}^{\b_{r-j-1}}-1)\prod_{i=0}^{j} \phi(p_{r-i}^{\b_{r-i}})}$ (which we will call the \emph{first equality} for the rest of this section) whilst the \emph{second equality}, namely $\omega(G)=\chi(G)$, is mentioned as a direct consequence of the \emph{strong perfect graph theorem} \cite{perfect}. The following conjecture appears in \cite{ashrafi} and has been cited in, eg. \cite{survey}.
\begin{conj}\cite[conjecture 1]{ashrafi}\label{conj}
 The theorem \cite[theorem 3]{ashrafi} is correct in general.
\end{conj}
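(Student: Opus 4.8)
\emph{Sketch of the approach.}

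The plan is to bypass the strong perfect graph theorem of \cite{perfect} entirely and instead exhibit the power graph of a finite group as a \emph{comparability graph}, one of the oldest and best-understood families of perfect graphs. Fix a finite group $G$ and an arbitrary total order $\le$ on its underlying set, and for $x,y\in G$ declare $x\preceq y$ if either $\langle x\rangle\subsetneq\langle y\rangle$, or $\langle x\rangle=\langle y\rangle$ and $x\le y$. Checking transitivity is routine --- one traces through the ways a relation $x\preceq y\preceq z$ can be witnessed by strict inclusions and by the tie-break --- so $\preceq$ is a partial order on $G$. Since in a group $x\in\langle y\rangle$ is equivalent to $\langle x\rangle\subseteq\langle y\rangle$, two distinct elements are adjacent in the power graph $\mathcal P(G)$ exactly when their cyclic subgroups are $\subseteq$-comparable, that is, exactly when $x$ and $y$ are $\preceq$-comparable. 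Hence $\mathcal P(G)$ \emph{is} the comparability graph of the poset $(G,\preceq)$.

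From this description the equality $\omega=\chi$ is immediate and holds for every finite group. In the comparability graph of a poset the cliques are the chains and the independent sets are the antichains, so by Mirsky's theorem $\chi(\mathcal P(G))$ --- the least number of antichains covering $G$ --- equals the length of the longest $\preceq$-chain, which is $\omega(\mathcal P(G))$. Running the same argument on induced subposets shows $\mathcal P(G)$ is in fact perfect --- a Berge graph in the sense of \cite{perfect} --- for \emph{every} finite group, with no appeal to \cite{perfect} at all; this is the ``nicer'' statement, valid on the larger class of comparability graphs and without any hypothesis on the exponent. It also pins down the ambiguity in the conjecture of \cite{ashrafi}: the statement that is --- and should be --- true ``in general'' is $\omega(G)=\chi(G)$ for all finite $G$, while the closed formula of Theorem \ref{ashrafi} is the exact value of $\omega(G)$ under the full exponent hypothesis.

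To recover that closed formula, note first that the top element of a $\preceq$-chain generates a cyclic subgroup containing the generators of every lower element, so each clique of $\mathcal P(G)$ lies in a single cyclic subgroup and $\omega(\mathcal P(G))=\max_C\omega(\mathcal P(C))$ over cyclic $C\le G$. For cyclic $C$ of order $n$, a clique amounts to a chain of divisors $1=d_0\mid d_1\mid\cdots\mid d_s=n$ together with a choice of generators at each level; taking all generators and refining until each step is a single prime (legitimate because $\phi>0$) yields $\omega(\mathcal P(C))=\max\sum_i\phi(d_i)$ over prime-step divisor chains of $n$. If $G$ has full exponent $m=p_1^{\beta_1}\cdots p_r^{\beta_r}$, any divisor chain for $n\mid m$ extends up to $m$, so the maximum is attained at a cyclic subgroup of order $m$, and the task reduces to the arithmetic optimization of $\sum_i\phi(d_i)$ over orderings of the prime multiset of $m$. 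An exchange argument does it: transposing an adjacent pair that introduces a smaller prime $p_a$ before a larger prime $p_b$ changes $\sum_i\phi(d_i)$ by $\phi(d)(p_b-p_a)$ or $\phi(d)(p_b-p_a\pm1)$ for the relevant intermediate divisor $d$, all nonnegative since $p_b-p_a\ge1$; hence the greedy chain $1\mid p_r\mid p_r^2\mid\cdots\mid p_r^{\beta_r}\mid p_{r-1}p_r^{\beta_r}\mid\cdots\mid m$ is optimal, and summing $\phi$ along it telescopes by way of $\sum_{j=0}^{\beta}\phi(p^j)=p^{\beta}$ to the value in Theorem \ref{ashrafi}.

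The delicate part is this last stretch: the reduction must be run carefully enough to be sure that for a full exponent group nothing outside a maximal cyclic subgroup can beat it, and the exchange argument must be checked in the cases where the transposed primes already divide the running divisor (so that $\phi$ gains a factor $p$ rather than $p-1$) and must be seen to give a global rather than merely local optimum. By contrast the equality $\omega=\chi$ --- which in \cite{ashrafi} was the step deferred to the strong perfect graph theorem --- is essentially a triviality once the comparability-graph description is available, and the main work of the paper is really the combinatorial optimization above together with the expository job of disambiguating the conjecture.
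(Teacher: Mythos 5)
Your proposal is correct, and for the heart of the matter --- the equality $\omega=\chi$, which the paper identifies as the part of the conjecture that survives in full generality --- you take a genuinely different and in some respects stronger route. The paper's first proof shows that the underlying undirected graph of any preordered set has no odd holes (Lemma \ref{hole}) and no odd antiholes (Lemma \ref{antihole}), hence is Berge, and then invokes the strong perfect graph theorem \cite{perfect}; its second proof avoids that theorem by explicitly building a minimal coloring that propagates along cyclic subgroups (Lemmas \ref{cyclic} and \ref{counting}). You instead break the ties $\langle x\rangle=\langle y\rangle$ by an arbitrary total order, turning the power preorder into a genuine partial order whose comparability graph is exactly the power graph, so that $\omega=\chi$ falls out of Mirsky's theorem (color each element by its height) with no appeal to \cite{perfect} at all. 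This is cleaner and strictly more informative than Theorem \ref{berge}: the same tie-breaking argument applies to an arbitrary preorder, so the ``larger class of graphs'' of the paper is in fact a subclass of comparability graphs, which are perfect for classical, elementary reasons. Your reduction of $\omega(G)$ to the maximum over cyclic subgroups, and the divisor-chain description of cliques, coincide with Proposition \ref{computation} and the recurrence $\Psi(n)=\phi(n)+\max_{p\mid n}\Psi(n/p)$; your monotonicity remark ($n\mid m$ implies $\Psi(n)\le\Psi(m)$) correctly recovers the full exponent case, and your disambiguation of the conjecture matches the paper's (with $S_5$ as its counterexample to the naive generalization of the first equality). The one place where you are no more detailed than the paper is the resolution of the optimization $\max\sum_i\phi(d_i)$ to the closed formula: your exchange argument is only sketched, and you flag the delicate cases yourself, but the paper likewise asserts the closed form of $\Psi$ without proof, so this is not a gap relative to the paper's own standard.
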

Ambiguity enters here as either of the equalities or both can be generalized and in that generality the conjecture easily seen to be false, as we see below. The second equality, which is the focus of this paper, generalizes as is. However, applying the strong perfect graph theorem does require some argument, which does not seem to call for any assumption related to cyclicity and full exponentiality of the group. In all, we are inclined to dismiss the relevance of full exponent groups in this context.

The second part, in fact is easier to see. Note that the power graph being the underlying undirected graph of the directed edges $g \rightarrow h$ if $g^k=h$ for some $k$, which in fact forms a preorder, a maximal clique is induced by a directed path of maximum length. In this context, in fact it will consist of a collection of  powers of the source of this path, say $g$, and hence be contained in the cyclic subgroup generated by $g$.

Let $n=p_1^{\a_1}p_2^{\a_2}\ldots p_k^{\a_k}$ be the order of $g$ with $p_k > p_{k-1} \ldots > p_1$, define $\Psi(n)$ to be the size of the largest clique in $\langle g \rangle$. Since the longest directed path must visit all the $\phi(n)$ generators of $\langle g \rangle$ starting at one of them and then extend to the $p^{th}$ power for a prime $p | n$, we see the recurrence $\Psi(n)= \phi(n)+ \mathrm{max}_{p | n}\{\Psi(\frac{n}{p}) \}$. In fact it can be resolved to the following formula: 

\label{formula}
  \[\Psi(n)=1+\displaystyle{\sum_{r=1}^{k}(p_{r}^{\a_r}-1)\prod_{j=r+1}^{k}p_{j}^{\a_j-1}(p_j-1)}\]
Note that this concurs with the formula in \cite[theorem 3]{ashrafi}. The above discussion can be summarized in the following simplified proposition:
\begin{prop}\label{computation}
 Let $g \in G$ be such that $\Psi(o(g))$ is maximum then $\omega(G)=\Psi(o(g))$
\end{prop}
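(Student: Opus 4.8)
The plan is to extract the two inequalities $\omega(G)\ge\Psi(o(g))$ and $\omega(G)\le\Psi(o(g))$ from the fact, already noted above, that the power graph of $G$ is the comparability graph of a preorder. Concretely, for $a,b\in G$ declare $a\preccurlyeq b$ when $a\in\langle b\rangle$; this is reflexive, and it is transitive since $a\in\langle b\rangle$ and $b\in\langle c\rangle$ give $\langle a\rangle\le\langle b\rangle\le\langle c\rangle$. Thus two distinct elements are adjacent in the power graph exactly when they are $\preccurlyeq$-comparable. I would also record that for each $g\in G$ the subgraph induced on $\langle g\rangle$ is the power graph of the cyclic group $C_{o(g)}$, because the cyclic subgroup generated by an element of $\langle g\rangle$ is the same whether formed inside $\langle g\rangle$ or inside $G$; since all cyclic groups of a given order are isomorphic, $\Psi(o(g))$ is well defined and is the clique number of this induced subgraph.

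The lower bound is then immediate: for any $g\in G$ a largest clique of the power graph of $C_{o(g)}$ sits inside $\langle g\rangle\subseteq G$ and is a clique of $G$, so $\omega(G)\ge\Psi(o(g))$, and in particular $\omega(G)\ge\max_{x\in G}\Psi(o(x))$. For the upper bound, let $K$ be a maximum clique and pick $h\in K$ with $o(h)$ maximal among the elements of $K$. Given any $a\in K$, the pair $\{a,h\}$ is an edge (or $a=h$), so $a\in\langle h\rangle$ or $h\in\langle a\rangle$; in the latter case $\langle h\rangle\le\langle a\rangle$, and maximality of $o(h)$ forces $\langle h\rangle=\langle a\rangle$, so again $a\in\langle h\rangle$. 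Hence $K\subseteq\langle h\rangle$, and therefore $\omega(G)=|K|\le\Psi(o(h))\le\max_{x\in G}\Psi(o(x))$. Combining the two bounds and applying the lower bound to a maximiser $g$ gives $\omega(G)=\Psi(o(g))$.

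I do not expect a genuine obstacle here: the proposition is a tidy repackaging of the discussion that precedes it. The single load-bearing point, already isolated above, is the transitivity of $\preccurlyeq$ — it is what confines an arbitrary clique to one cyclic subgroup and so reduces the computation of $\omega(G)$ to the cyclic case, where $\Psi$ is given by the explicit formula recorded above.
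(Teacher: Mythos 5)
Your proposal is correct and follows essentially the same route as the paper: the paper's (rather terse) argument also confines a maximum clique to a single cyclic subgroup via the preorder structure, observing that the clique consists of powers of the source of a maximal directed path, which is exactly your element $h$ of maximal order. Your write-up merely makes explicit the step the paper leaves informal (that $h\in\langle a\rangle$ together with maximality of $o(h)$ forces $\langle h\rangle=\langle a\rangle$), so there is nothing genuinely different to compare.
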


Now it is easy to see why the straight-forward generalization of the first equality need not be true in general, as the maximum of $\Psi(o(g))$ need not coincide with $\Psi(n)$ for the exponent $n$ of $G$. For example if $G=S_5$ we see that $n =60$, $\Psi(n)=37$ but $\omega(G)=5$. In our main theorem stated below we prove the conjecture \ref{conj}, with the said correction implemented. 
\begin{thm}\label{main}
 Let $G$ be the power graph of a finite group, and let $g \in G$ be such that $\Psi(o(g))$ is maximum, then $ \chi(G)=\omega(G)=\Psi(o(g))$.
\end{thm}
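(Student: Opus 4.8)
The plan is to separate the two equalities. Proposition~\ref{computation} already gives $\omega(G)=\Psi(o(g))$ for a $g$ with $\Psi(o(g))$ maximum, so the remaining task is $\chi(G)=\omega(G)$. I would establish this by proving that the power graph of a finite group is a \emph{perfect} graph; the equality $\chi=\omega$ then follows at once, and ``perfect'' is the ``nicer'' statement on a larger class of graphs referred to in the abstract.

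First I would formalize the remark already in the introduction: the directed power graph records the preorder $x\preceq y\iff x\in\langle y\rangle$ on $G$. Set $x\equiv y\iff \langle x\rangle=\langle y\rangle$; this is an equivalence relation whose classes are cliques of the power graph, and $[x]\preceq[y]\iff x\in\langle y\rangle$ is a well-defined partial order $P$ on the set of $\equiv$-classes (antisymmetry is precisely the fact that $x\in\langle y\rangle$ and $y\in\langle x\rangle$ force $\langle x\rangle=\langle y\rangle$). The one place the group structure is genuinely used is that adjacency in the power graph is constant on $\equiv$-classes: whether $x\sim y$ depends on $x,y$ only through $[x],[y]$, since membership in $\langle x\rangle$ depends only on $\langle x\rangle$. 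Hence the power graph is obtained from the comparability graph of $P$ by blowing up each vertex into the complete graph on its $\equiv$-class.

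I would then conclude by invoking two classical facts: comparability graphs are perfect, and perfection is preserved under substitution (Lovász's replacement lemma, of which blowing up vertices into cliques is the special case we need). This gives perfection of the power graph, hence $\chi(G)=\omega(G)$, and together with Proposition~\ref{computation} we obtain $\chi(G)=\omega(G)=\Psi(o(g))$. If instead one wishes to follow the route through the strong perfect graph theorem taken by the earlier sources, the same structural description makes it transparent that the power graph is Berge: in a comparability graph, a transitive orientation of an induced cycle must alternate around the cycle --- two consecutive like-oriented edges would create a chord by transitivity --- which excludes induced odd cycles of length $\ge 5$, and a short separate argument rules out the complements of odd cycles of length $\ge 7$ (the $5$-antihole being $C_5$ itself). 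Blowing up vertices into cliques introduces no new holes or antiholes, so the SPGT applies to the power graph directly.

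The step that needs the most care is the ``all-or-nothing'' adjacency claim together with the verification that $P$ is a genuine partial order, and --- on the SPGT route --- the antihole case, which, unlike the odd-hole case, is not handled by the orientation argument and requires its own small combinatorial check. Everything else is bookkeeping and citation of standard perfect-graph theory. Finally, I would emphasize that neither cyclicity nor full-exponentiality of $G$ enters the equality $\chi(G)=\omega(G)$; these hypotheses bear only on the closed-form evaluation of $\omega$, which is exactly why they should be removed from Conjecture~\ref{conj} as corrected.
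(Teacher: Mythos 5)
Your proposal is correct, and it reaches the second equality by a genuinely different route from the paper's first proof. The paper also reduces to showing that the underlying undirected graph of a preorder is perfect, but it does so by verifying the Berge condition directly --- odd holes are excluded by the same edge-orientation alternation argument you sketch (Lemma \ref{hole}), and odd antiholes by a separate red/blue vertex-counting parity argument (Lemmas \ref{first} and \ref{antihole}) --- and then invoking the strong perfect graph theorem. Your main route replaces that heavy machinery with two classical, pre-SPGT facts: comparability graphs are perfect, and perfection is preserved under substitution of a clique for a vertex (Lov\'asz). This is a real gain in economy of means; note also that your structural observation needs no group theory at all --- in any preorder, if $x\preccurlyeq y$ and $x'\equiv x$ then $x'\preccurlyeq y$, so adjacency between distinct classes always factors through the quotient poset --- hence your argument proves the paper's Theorem \ref{berge} (indeed the stronger statement that such graphs are perfect) in full generality. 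One can even dispense with the substitution lemma: ordering each $\equiv$-class arbitrarily and declaring $x\sqsubset y$ when $[x]<[y]$, or $[x]=[y]$ with $x$ earlier in its class, gives a transitive orientation of the whole power graph, so it is itself a comparability graph. Your fallback SPGT sketch matches the paper's Lemma \ref{hole} for odd holes and correctly flags that the antihole case needs its own argument (the paper supplies the counting argument you anticipate), so nothing essential is missing. Be aware that the paper additionally gives a second, constructive proof via ``stable colorings'' of cyclic subgroups, which produces an explicit $\omega(G)$-coloring without any perfect-graph theory; your approach does not yield that extra information, but for the stated theorem it is complete, given Proposition \ref{computation} for $\omega(G)=\Psi(o(g))$, which you correctly treat as the already-established half.
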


 We give a proof of the above in the generality of preordered sets in section 3. Attempting to give a more elementary argument, however, we encounter an interesting stability property of the colorings of power graphs, namely that the coloring restricts to a minimal coloring on any subgroup. Here we make a definition: 
 \begin{defn}\label{stable}
 For a finite group $G$ a \emph{stable coloring} on $G$ is a coloring which restricts, for any subgroup $H \subset G$, to a coloring on $H$ with $\chi(H)$ colors. The coloring is said to be \emph{weakly stable} if it holds only for cyclic subgroups $H$.
\end{defn}
We in fact show that the power graph of a finite group admits a weakly stable coloring in section 4. Moreover, we show that for a cyclic group $G$ a stable coloring on a subgroup extends to a stable coloring on $G$. Both these statements are possibly generalizable to all groups and stable colorings. In addition this gives rise to purely graph theoretic questions (with the notion of stability generalized to subgraphs): 
\begin{question}
 Which classes of graphs admit stable colorings? 
\end{question}
\begin{question}
 Does the extension property hold true for stable colorings on Berge graphs?
\end{question}


\section{The Basics}
In this section we gather few elementary definitions and results which can be found in \cite{cameron,cameron2,survey, harary}. A directed power graph of a group $G$ is a graph $P_G=(G,E)$ where $(g,h) \in E \mbox{ if } \exists k \in \mathbb{N}, \mbox{ such that } g^k=h$. Note that if $(g,h) \in E$ and $(h,l) \in E$ then $(g,l) \in E$. So the directed power graph can be thought of as a pre-ordered set if we define $g \preccurlyeq h $ whenever $(g,h) \in E$. A graph $G$ is called a Berge graph if there are no holes or antiholes of odd length\cite{perfect}, where a hole is a set of vertices of size more than or equal to four in the graph $G$ whose induced subgraph is a cycle. And an antihole is a hole in the complement graph. And a graph is called perfect if for every induced subgraph $H$ of the graph, the chromatic number equals the size of the largest clique in $H$. Let us also recall the strong perfect graph theorem from the article\cite{perfect}:
\begin{thm}\cite{perfect}\label{perfect} A graph is a Berge graph if and only if it is perfect.
 
\end{thm}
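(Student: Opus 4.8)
The plan is to prove the two directions separately, since one is soft and the other is the deep structural result. For the forward implication --- perfect implies Berge --- suppose $G$ is perfect. Any induced odd hole $C_{2k+1}$ with $k\ge 2$ has clique number $2$ but chromatic number $3$, hence is not perfect; since perfection passes to induced subgraphs, $G$ can contain no odd hole. To exclude odd antiholes I would first establish the weak perfect graph theorem of Lov\'asz, that $G$ is perfect if and only if its complement $\overline{G}$ is, for instance through the ``vertex multiplication'' lemma (replacing a vertex by a clique preserves perfection) together with Lov\'asz's inequality $\alpha(H)\,\omega(H)\ge |V(H)|$ for every induced subgraph $H$. An odd antihole of $G$ is then an odd hole of the perfect graph $\overline{G}$, which cannot occur. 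Thus perfect implies Berge.

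For the reverse implication --- Berge implies perfect --- I would argue by contradiction, taking a \emph{minimal imperfect Berge graph} $G$: Berge and not perfect, but with every proper induced subgraph perfect. Two ingredients drive the argument. First, the classical theory of minimal imperfect graphs (Lov\'asz, Padberg, Chv\'atal): such a $G$ is \emph{partitionable}, satisfies $|V(G)| = \alpha(G)\,\omega(G)+1$, and admits no star cutset, no homogeneous set, and no homogeneous pair --- and the same holds for $\overline{G}$. Second, and decisively, the \emph{structure theorem for Berge graphs}: every Berge graph is either \emph{basic} --- bipartite, the complement of a bipartite graph, the line graph of a bipartite graph, the complement of such a line graph, or a double split graph --- or else admits one of a short list of decompositions, namely a $2$-join, a $2$-join of $\overline{G}$, a homogeneous pair, or a balanced skew partition.

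Granting the structure theorem, the contradiction assembles as follows. Each basic class is directly seen to be perfect: K\"onig's theorem handles bipartite graphs and line graphs of bipartite graphs, duality handles their complements, and a short explicit coloring handles double split graphs; so a minimal imperfect graph cannot be basic. It therefore admits a decomposition, but each possible decomposition is incompatible with minimal imperfectness. Homogeneous sets and homogeneous pairs are forbidden by the results of Lov\'asz and of Chv\'atal--Sbihi, a balanced skew partition yields a skew cutset excluded by the Roussel--Rubio lemma (Chv\'atal's skew partition conjecture), and a $2$-join in $G$ or $\overline{G}$ is ruled out by an amalgamation argument that splits $G$ along the join into strictly smaller Berge graphs whose perfection would propagate back to $G$. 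Since a minimal imperfect Berge graph can be neither basic nor decomposable, no such graph exists, and every Berge graph is perfect.

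The overwhelming difficulty is concentrated in the structure theorem itself. Its proof is a long induced-subgraph analysis organized by the presence or absence of particular configurations --- most importantly \emph{prisms}, \emph{wheels}, and the associated ``stretchers'' --- in which one first shows that a Berge graph containing a suitable such configuration, together with a careful accounting of how the remaining vertices attach to it, must admit a $2$-join or a skew partition, and then argues that a Berge graph avoiding all of them is basic. Every assertion must be carried out symmetrically for $G$ and $\overline{G}$, and it is precisely the bookkeeping of attachments to a fixed configuration that makes the argument run, in the literature, to well over a hundred pages. For the present paper this theorem is used only as a black box in the form stated, which is what we do.
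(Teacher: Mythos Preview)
The paper does not prove this statement at all: it is quoted verbatim as the strong perfect graph theorem of Chudnovsky, Robertson, Seymour and Thomas, with the citation \cite{perfect}, and is invoked purely as a black box to deduce $\chi(G)=\omega(G)$ once the power graph is shown to be Berge. There is nothing to compare your proposal to on the paper's side.

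That said, your sketch is an accurate high-level summary of how the theorem is actually proved in the cited source --- the easy direction via Lov\'asz's weak perfect graph theorem, and the hard direction via the structure theorem for Berge graphs combined with the exclusion of each decomposition type (2-joins, balanced skew partitions, homogeneous pairs) in a minimum counterexample. You are also right that the structural analysis is where the hundred-plus pages live, and your closing remark that the present paper uses the result only as a black box is exactly what happens. So your proposal is not wrong; it is simply far more than the paper attempts, and for the purposes of this article the single citation is all that is required.
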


\section{The First Proof}

\begin{thm}\label{berge} The underlying undirected graph of a preordered set $(V, \lesc)$ is a Berge graph.
 
\end{thm}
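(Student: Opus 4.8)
The plan is to show that the underlying undirected graph $\Gamma$ of a preordered set $(V,\lesc)$ contains no induced odd cycles of length $\geq 5$ and no induced complements of such cycles, i.e. it is Berge. The key observation is that adjacency in $\Gamma$ means comparability: $u \sim v$ iff $u \lesc v$ or $v \lesc u$. So I would first analyze what an induced cycle looks like in terms of the preorder, and then separately handle holes and antiholes.

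For holes: suppose $v_1, v_2, \ldots, v_m, v_1$ is an induced cycle with $m \geq 5$. Consecutive vertices are comparable; non-consecutive ones are incomparable. Consider the relation restricted to the cycle. I would argue that along such a cycle the ``directions'' of the comparabilities must alternate: if $v_{i-1} \lesc v_i$ and $v_i \lesc v_{i+1}$ both held (same direction), then by transitivity $v_{i-1} \lesc v_{i+1}$, making them comparable — but for $m \geq 4$ they are non-consecutive, hence should be non-adjacent, a contradiction (one needs $m\geq 4$ so that $v_{i-1}$ and $v_{i+1}$ are genuinely non-consecutive; a mild degenerate case when they coincide is ruled out since cycle vertices are distinct). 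Hence each $v_i$ is either a local source (below both neighbours' relations point... more precisely $v_i \lesc v_{i-1}$ and $v_i \lesc v_{i+1}$, a ``local minimum'') or a local sink (a ``local maximum''), and these alternate around the cycle. An alternating pattern forces the cycle length to be even, so no odd hole exists. (I should be slightly careful with a preorder rather than a partial order — antisymmetry can fail — but distinct adjacent vertices with $u\lesc v$ and $v\lesc u$ would still be fine; the transitivity argument is what matters, and I'd note that $v_i\lesc v_{i-1}$, $v_{i-1}\lesc v_i$ simultaneously is harmless.)

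For antiholes: a hole of length $m$ in the complement $\overline{\Gamma}$ is a set $v_1,\ldots,v_m$ where $v_i$ is \emph{non}-adjacent in $\Gamma$ to $v_{i+1}$ (i.e. incomparable) and adjacent in $\Gamma$ to all non-consecutive $v_j$ (i.e. comparable). For $m = 5$ the antihole is the same as the 5-hole, already handled. For $m \geq 7$, I would use that the non-consecutive vertices form a large comparable set: e.g. $v_1$ is comparable to $v_3, v_4, \ldots, v_{m-1}$, and these in turn have many comparabilities among themselves. The strategy is to extract from the antihole a long chain (totally ordered subset) via transitivity and show it forces an even length or a direct contradiction; concretely, among $v_1, v_3, v_5, \ldots$ (indices two apart) consecutive ones are comparable, so one gets a ``walk'' in the preorder whose direction again must alternate by the same transitivity trick applied to the incomparable pairs $v_i \not\sim v_{i+1}$. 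I expect this antihole case to be the main obstacle, since the combinatorics of which pairs are comparable is denser and one must be careful that the alternation argument still yields the needed parity constraint; a clean way may be to reduce it to the hole case by observing that the complement of an antihole of length $m\geq 7$ contains an induced odd hole, or to invoke that $\overline{C_m}$ for $m \geq 7$ itself contains $C_4$-free configurations that cannot be realized by a preorder.

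Finally, I would assemble these two cases: $\Gamma$ has no odd hole and no odd antihole, hence is Berge by definition. As a remark, combined with Theorem~\ref{perfect} this immediately yields that $\Gamma$ is perfect, which is the ingredient needed for the ``second equality'' $\chi = \omega$ in Theorem~\ref{main}.
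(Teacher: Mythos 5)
Your treatment of odd holes is correct and is essentially the paper's own argument (Lemma \ref{hole}): orient each edge of the cycle according to $\lesc$, observe that two consecutive edges oriented the same way produce a chord by transitivity, so the orientations must alternate, which is impossible around a cycle of odd length. The antihole case, however, is a genuine gap, and you flag it yourself: what you offer there are candidate strategies rather than an argument, and the most concrete one --- reducing to the hole case because ``the complement of an antihole of length $m\geq 7$ contains an induced odd hole'' --- does not work. The complement of $\overline{C_m}$ is indeed $C_m$, but that odd hole lives in the complement graph $\overline{\Gamma}$, which is in general not the underlying graph of any preorder (this class of graphs is not closed under complementation), so your hole lemma gives no information about it. The sketch about extracting chains among $v_1,v_3,v_5,\ldots$ is likewise not developed to the point where any parity contradiction actually appears.

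The paper closes the antihole case (Lemmas \ref{zero}, \ref{first} and \ref{antihole}) with a different and short idea that is missing from your proposal. First, a local rigidity statement: if $v_i$ is adjacent to both $v_j$ and $v_{j'}$ while $v_j$ and $v_{j'}$ are non-adjacent, then the two comparabilities at $v_i$ must point the same way, since otherwise transitivity chains $v_j \lesc v_i \lesc v_{j'}$ (or the reverse) into the forbidden edge between $v_j$ and $v_{j'}$. In an antihole $v_0,\ldots,v_{2n}$ each vertex is adjacent to every vertex except its two cyclic neighbours, and the consecutive non-edges $v_kv_{k+1}$ allow this observation to propagate around the whole antihole: every vertex is either a ``source'' ($\lesc$ all of its neighbours in the antihole) or a ``sink'' (all of its neighbours $\lesc$ it). Every edge of the antihole then joins a source to a sink, and since the induced subgraph $\overline{C_{2n+1}}$ is regular, double-counting edge endpoints forces the number of sources to equal the number of sinks --- impossible when the total number of vertices, $2n+1$, is odd. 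This source/sink dichotomy together with the parity count is the ingredient you would need to supply; no reduction to the hole case is required.
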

\begin{proof}
 follows from lemmas \ref{hole} and \ref{antihole}
\end{proof}

\begin{cor}
 Theorem \ref{main} follows.
\end{cor}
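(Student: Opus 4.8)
The plan is to derive Theorem~\ref{berge} from the strong perfect graph theorem (Theorem~\ref{perfect}): a Berge graph is exactly a graph with no induced odd hole and no induced odd antihole, so it suffices to rule out both in the underlying undirected graph $G$ of $(V,\lesc)$. I would organize this as the two lemmas the one-line proof invokes.

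Before treating either, I would make one reduction. On the vertex set $\{v_1,\dots,v_\ell\}$ of a putative induced odd hole or antihole ($\ell\geq 5$), the relation $\lesc$ is in fact a partial order: if $v_a\lesc v_b\lesc v_a$ with $a\neq b$, then transitivity forces $v_a$ and $v_b$ to be adjacent and to have exactly the same neighbors among the remaining $v_i$, and neither $C_\ell$ nor $\overline{C_\ell}$ contains such a pair of vertices once $\ell\geq 5$. Consequently each edge $\{v_i,v_j\}$ of the induced subgraph carries an unambiguous orientation $v_i\to v_j$ meaning $v_i\prec v_j$, and transitivity now reads: if $v_i\to v_j$ and $v_j\to v_k$ are edges, then $\{v_i,v_k\}$ is an edge oriented $v_i\to v_k$.

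\emph{No odd hole.} Given an induced cycle $v_1v_2\cdots v_\ell v_1$ with $\ell\geq 4$, oriented as above, no vertex $v_i$ can have one incoming and one outgoing cycle-edge, since that would make $v_{i-1}$ and $v_{i+1}$ comparable, hence adjacent, contradicting that the cycle is induced. So every $v_i$ is a source or a sink of the cycle; two cycle-consecutive vertices cannot both be sources or both sinks, so sources and sinks alternate and $\ell$ is even. \emph{No odd antihole.} Suppose $v_0,\dots,v_{\ell-1}$ (indices modulo $\ell$, with $\ell\geq 5$ odd) induce $\overline{C_\ell}$, so that $v_i\sim v_j$ precisely when $j\neq i\pm 1$. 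Choose a $\prec$-maximal vertex and relabel it $v_0$; then $v_j\prec v_0$ for all $j\in\{2,\dots,\ell-2\}$, while $v_0$ is incomparable to $v_1$ and to $v_{\ell-1}$. Reflecting indices if necessary, assume $v_1\prec v_{\ell-1}$. The key observation is that if $u$ is incomparable to $w$ while $u\sim x$ and $x\prec w$, then $x\prec u$ (otherwise $u\prec x\prec w$). Using it, I would show by induction on $t=1,\dots,\ell-3$ that $v_{\ell-t}\succ v_j$ for every $v_j\in\{v_1,\dots,v_{\ell-t-2}\}$: the base case $t=1$ uses $v_1\prec v_{\ell-1}$ together with $v_j\prec v_0$, and the step from $t$ to $t+1$ applies the observation with $w=v_{\ell-t}$ and $u=v_{\ell-t-1}$. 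Taking $t=\ell-3$ yields $v_1\prec v_3$, and since $3\in\{2,\dots,\ell-2\}$ we also have $v_3\prec v_0$, so $v_1\prec v_0$ — contradicting that $v_0$ and $v_1$ are incomparable.

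Together these lemmas show $G$ is Berge, so Theorem~\ref{perfect} makes $G$ perfect; in particular $\chi(G)=\omega(G)$, and applying this to the power graph of a finite group (a preorder, as recalled above) together with Proposition~\ref{computation} gives Theorem~\ref{main}. The only genuinely fiddly point is the antihole induction: one must track the shrinking index windows $\{v_1,\dots,v_{\ell-t-2}\}$ and the single use of the index reflection carefully, so that the forced chain of dominations lands exactly on the forbidden relation $v_1\prec v_0$; by contrast the hole case and the twin reduction are routine.
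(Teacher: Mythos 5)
Your proposal is correct, and its overall architecture coincides with the paper's: establish Theorem~\ref{berge} by separately excluding odd holes and odd antiholes, then invoke the strong perfect graph theorem~\ref{perfect} and Proposition~\ref{computation}. Your odd-hole argument (every vertex of an induced cycle must be a source or a sink of the orientation, and these must alternate) is essentially the paper's Lemma~\ref{hole}, phrased on vertices rather than on red/blue edges. The antihole lemma, however, you prove by a genuinely different route. The paper (Lemmas~\ref{zero}, \ref{first}, \ref{antihole}) shows each antihole vertex is uniformly above or uniformly below all of its neighbours, observes that every edge then joins the two types, and derives $\card{\mathrm{red}}=\card{\mathrm{blue}}$ from regularity of $\overline{C_\ell}$ --- a parity argument that genuinely needs $\ell$ odd. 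You instead pick a $\prec$-maximal vertex $v_0$ and propagate comparabilities around the antihole by induction until the forced chain $v_1\prec v_3\prec v_0$ contradicts the non-adjacency of $v_0$ and $v_1$. Your induction checks out (the adjacency and incomparability hypotheses needed at each step do hold in $\overline{C_\ell}$ for $\ell\geq 5$), and notably it never uses parity, so it proves the stronger classical fact that no $\overline{C_\ell}$ with $\ell\geq 5$ is a comparability graph; the price is a fiddlier bookkeeping of index windows, where the paper's counting argument is shorter. Your preliminary twin reduction (that $\lesc$ restricted to the hole or antihole is automatically a partial order, since mutually comparable vertices would be adjacent true twins) is a worthwhile addition: it cleanly disposes of the well-definedness issue for genuine preorders that the paper's vertex two-colouring in Lemma~\ref{antihole} passes over with only a brief remark.
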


Let us denote, by abuse of notation, the underlying undirected graph of a preordered set $(V,\lesc)$ with $V=(V,E)$.
\begin{lem}\label{hole}
 Let $(V,\lesc)$ be a preordered set. Then there is no hole of odd length in the graph $V$.
\end{lem}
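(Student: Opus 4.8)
The plan is a proof by contradiction using only transitivity of $\lesc$ together with a parity argument. Suppose the underlying undirected graph of $(V,\lesc)$ contains an induced cycle $C$ on vertices $v_1,v_2,\dots,v_m$, indices read modulo $m$, with $m$ odd and $m\geq 5$. Each edge $v_iv_{i+1}$ of $C$ arises, by definition of the underlying graph, because $v_i\lesc v_{i+1}$ or $v_{i+1}\lesc v_i$ (possibly both). Since $C$ is induced and $m\geq 5$, the vertices $v_{i-1}$ and $v_{i+1}$ are distinct and non-adjacent in the graph, so neither $v_{i-1}\lesc v_{i+1}$ nor $v_{i+1}\lesc v_{i-1}$ holds.

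First I would use these facts to pin down, at each vertex $v_i$, the directions of its two cycle edges. By transitivity, $v_{i-1}\lesc v_i$ and $v_i\lesc v_{i+1}$ cannot hold simultaneously (they would force $v_{i-1}\lesc v_{i+1}$), and likewise $v_{i+1}\lesc v_i$ and $v_i\lesc v_{i-1}$ cannot hold simultaneously. Running through the (at least one) relation available on each of the two edges at $v_i$, these two exclusions force exactly one of the following: either $v_{i-1}\lesc v_i$ and $v_{i+1}\lesc v_i$ both hold while $v_i\lesc v_{i-1}$ and $v_i\lesc v_{i+1}$ both fail --- call $v_i$ a \emph{sink} --- or the reverse, in which case $v_i$ is a \emph{source}. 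In particular every edge of $C$ is strictly oriented in one direction only, which incidentally shows that no edge of a hole can join two $\lesc$-equivalent elements.

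Now I would finish with the parity contradiction. If $v_i$ is a source then $v_i\lesc v_{i+1}$ and not conversely, so the edge $v_iv_{i+1}$ contributes an incoming relation at $v_{i+1}$; hence $v_{i+1}$ is not a source, and by the dichotomy just established it is a sink. Symmetrically, a sink is always followed by a source. Thus the labels source/sink give a proper $2$-colouring of the cycle $C$, which is impossible since $C$ has odd length. This contradiction proves the lemma.

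I expect the one place that genuinely needs care is the step where the edge directions are pinned down, precisely because $\lesc$ is only a preorder and not a partial order: one must make sure that the possibility of a "two-way" edge (an $\lesc$-equivalence) is genuinely excluded rather than quietly assumed away, which is why I would spell out the small case analysis on which of $v_i\lesc v_{i\pm1}$ and $v_{i\pm1}\lesc v_i$ can hold. Once that is settled, the source/sink alternation and the odd-cycle obstruction are immediate, and I would expect the companion statement about antiholes to be provable by the same scheme applied in the complement.
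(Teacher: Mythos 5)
Your proof is correct and is essentially the paper's argument in dual form: the paper two-colours the \emph{edges} of the hole (red if $v_i \preccurlyeq v_{i+1}$, blue otherwise) and uses transitivity plus inducedness to show consecutive edges must alternate, whereas you two-colour the \emph{vertices} as sources/sinks; both reduce to the impossibility of a proper $2$-colouring of an odd cycle. Your extra care in excluding two-way edges is sound but not needed in the paper's phrasing, since its ``red if $v_i \preccurlyeq v_{i+1}$, else blue'' convention assigns each edge a single colour in any case.
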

\begin{proof} Let $v_0,v_1,v_2 \ldots v_{2n}$, $v_i$ distinct for distinct $i$, be a hole in $V$ i.e. $(v_i,v_{i+1}) \in E \mbox{ and } (v_{2n},v_0) \in E \mbox{ and } (v_i, v_j) \notin E \mbox{ else }$. If $v_i \lesc v_{i+1}$ then let us call the edge $(v_i,v_{i+1})$ a red edge, else let us call it a blue edge. Note that if two subsequent edges $(v_i,v_{i+1}), (v_{i+1}, v_{i+2})$ have the same color then we have $(v_i, v_{i+2}) \in E$ so the colors must alternate. But since there are odd number of edges in the hole, there must be two consecutive edges of same color. A contradiction.  
 
\end{proof}

\begin{lem}\label{zero}
 For any $i, j , j' $, if $ (v_i,v_j), (v_i, v_{j'}) \in E \mbox{ and } (v_j, v_{j'}) \notin E  $, then, either $v_i \lesc v_j \mbox { and } v_i \lesc v_{j'}$, or, $ v_j \lesc v_i \mbox{ and } v_{j'} \lesc v_i$.
\end{lem}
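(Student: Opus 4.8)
The plan is to run a short, exhaustive case analysis on the directions of the two edges meeting at $v_i$, using nothing beyond transitivity of $\lesc$. Since $(v_i,v_j)\in E$, at least one of $v_i\lesc v_j$ or $v_j\lesc v_i$ holds, and since $(v_i,v_{j'})\in E$, at least one of $v_i\lesc v_{j'}$ or $v_{j'}\lesc v_i$ holds. Combining these, there are four configurations to examine.

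Two of the four configurations are precisely the two conclusions of the lemma: $v_i\lesc v_j$ together with $v_i\lesc v_{j'}$, and $v_j\lesc v_i$ together with $v_{j'}\lesc v_i$. So it suffices to eliminate the two ``mixed'' configurations. If $v_i\lesc v_j$ and $v_{j'}\lesc v_i$, then transitivity yields $v_{j'}\lesc v_j$, whence $(v_j,v_{j'})\in E$, contradicting the hypothesis; symmetrically, if $v_j\lesc v_i$ and $v_i\lesc v_{j'}$, then $v_j\lesc v_{j'}$ and again $(v_j,v_{j'})\in E$, a contradiction. Thus only the two ``aligned'' configurations can occur, which is the claim.

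I do not expect any real obstacle here. The only point requiring a little care is that the case split is genuinely complete: an edge of the underlying undirected graph may correspond to comparability in either or in both directions, but allowing both directions only makes more of the stated alternatives hold, never fewer, so no case is lost. The single structural input actually used is transitivity of the preorder. In the sequel this lemma is the basic combinatorial tool for the antihole analysis (Lemma \ref{antihole}): it forces a common neighbour $v_i$ of a non-adjacent pair $v_j,v_{j'}$ to be either a common $\lesc$-lower bound or a common $\lesc$-upper bound of that pair.
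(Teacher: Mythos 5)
Your proof is correct and is essentially the paper's own argument: the paper likewise observes that if neither stated alternative holds then $v_j \lesc v_i \lesc v_{j'}$ or $v_{j'} \lesc v_i \lesc v_j$, and transitivity then forces $(v_j,v_{j'}) \in E$, a contradiction. Your version merely spells out the four-way case split more explicitly, including the harmless possibility that an edge is comparable in both directions.
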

\begin{proof}
 Else $v_j \lesc v_i \lesc v _{j'}$ or $v_{j'} \lesc v_i \lesc v_j$. Either way $(v_j,v_{j'}) \in E$, a contradiction.
\end{proof}

\begin{lem}\label{first}
 Let $v_0,v_1,\ldots, v_{2n}$ be an antihole. Then for any $i \, (0 \leq i \leq 2n)$ if $v_j \lesc v_i$ for some $j$ such that $(v_i,v_j) \in E$, then $v_j \lesc v_i \; \forall \, j \mbox{ such that } (v_i,v_j) \in E$ 
\end{lem}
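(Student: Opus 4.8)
The plan is to analyze the structure of an antihole in a preordered set and show that the orientation at each vertex is ``coherent'' in the sense claimed. Recall that in an antihole $v_0, v_1, \ldots, v_{2n}$, the vertex $v_i$ is adjacent (in $V$) to every $v_j$ except $v_{i-1}$ and $v_{i+1}$ (indices mod $2n+1$). So for a fixed $i$, the set of $j$ with $(v_i, v_j) \in E$ is exactly $\{0, 1, \ldots, 2n\} \setminus \{i-1, i, i+1\}$, and we want to show the preorder relation between $v_i$ and all these $v_j$ points in a single direction.

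First I would fix $i$ and suppose $v_j \lesc v_i$ for some particular $j$ with $(v_i, v_j) \in E$. Take any other $j'$ with $(v_i, v_{j'}) \in E$; I want $v_{j'} \lesc v_i$. The natural tool is Lemma \ref{zero}: if it happened that $(v_j, v_{j'}) \notin E$, then since $(v_i, v_j), (v_i, v_{j'}) \in E$, Lemma \ref{zero} forces either both $v_j, v_{j'} \lesc v_i$ or both $v_i \lesc v_j, v_{j'}$; the latter is incompatible with $v_j \lesc v_i$ (as $v_j \ne v_i$ and these are distinct vertices of a hole/antihole, so we cannot have both $v_j \lesc v_i$ and $v_i \lesc v_j$ — or if we could, then $(v_j,v_{j'})\in E$ anyway), hence $v_{j'} \lesc v_i$ as desired. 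So the only problematic case is when $(v_j, v_{j'}) \in E$, i.e.\ $v_j$ and $v_{j'}$ are themselves adjacent in the antihole.

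The main obstacle, then, is handling pairs $j, j'$ that are adjacent to each other in $V$ — equivalently, $j, j'$ are not consecutive in the antihole cycle but are both adjacent to $v_i$. Here I would argue by a connectivity/propagation argument: define a graph $\Gamma_i$ on the index set $J_i = \{0,\ldots,2n\}\setminus\{i-1,i,i+1\}$ where $j \sim j'$ iff $(v_j, v_{j'}) \notin E$ (i.e.\ $j, j'$ are consecutive in the antihole). The argument above shows the orientation relative to $v_i$ is constant on each connected component of $\Gamma_i$. So it suffices to check that $\Gamma_i$ is connected, which amounts to a combinatorial fact about the complement of the cycle $C_{2n+1}$ restricted to the vertex set $J_i$: the ``consecutive in $C_{2n+1}$'' relation, restricted to the arc of $2n-2$ vertices obtained by deleting $i-1, i, i+1$, is simply a path, hence connected (using $n \geq 2$, so that $2n+1 \geq 5$ and $J_i$ is nonempty and the deleted vertices are genuinely three distinct vertices leaving a connected arc). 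I would state this as a short separate observation and then conclude.

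Finally I would note the symmetric statement: if instead $v_i \lesc v_j$ for some $j$ with $(v_i,v_j)\in E$, then $v_i \lesc v_{j'}$ for all such $j'$, by the same argument with the roles of the two directions of $\lesc$ reversed (or by applying the proven statement to the opposite preorder). This dichotomy — each antihole vertex is either a ``source'' or a ``sink'' relative to its antihole-neighbours in $E$ — is presumably what the subsequent Lemma \ref{antihole} will exploit to derive the contradiction ruling out odd antiholes, completing the proof of Theorem \ref{berge}.
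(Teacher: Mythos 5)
Your proposal is correct and is essentially the paper's own argument: the paper also propagates the orientation via Lemma \ref{zero} across consecutive (hence $E$-non-adjacent) pairs $v_k, v_{k+1}$, inducting along the arc of antihole vertices adjacent to $v_i$, which is exactly your observation that the ``non-adjacency'' graph on $J_i$ is a connected path. Your packaging via connected components of $\Gamma_i$ is just a slightly more formal rendering of the same induction.
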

\begin{proof}
Without loss of generality by possible renumbering of the vertices let us assume that $i=2$, then $(v_i,v_j) \in E$ precisely for $j=3,4, \ldots , 2n$. Note that $v_3 \lesc v_2 , \, v_4 \lesc v_2, \ldots v_k \lesc v_2 \Rightarrow v_{k+1} \lesc v_2$ too by lemma \ref{zero} since $(v_k,v_{k+1}) \notin E$. So inductively $v_j \lesc v_2 \, \forall j =3,4, \ldots 2n$ if $v_3 \lesc v_2$. Similarly we argue the case $v_3 \succcurlyeq v_2$.
\end{proof}

\begin{lem}\label{antihole}
 Let $(V,\lesc)$ be a preordered set. Then there is no antihole in the underlying undirected graph $V$ of odd length.
\end{lem}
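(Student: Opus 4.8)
The plan is to derive a contradiction from the existence of an odd antihole $v_0, v_1, \ldots, v_{2n}$ in the underlying undirected graph of $(V, \lesc)$, by showing that the direction of the preorder on the edges incident to each vertex is forced in a globally inconsistent way. Recall that in the antihole the non-edges of $V$ are precisely the consecutive pairs $(v_i, v_{i+1})$ (indices mod $2n+1$), so $(v_i, v_j) \in E$ exactly when $i$ and $j$ are non-consecutive. Lemma \ref{first} tells us that for each vertex $v_i$, either $v_j \lesc v_i$ for \emph{all} neighbours $v_j$ of $v_i$, or $v_i \lesc v_j$ for all of them; call $v_i$ a \emph{sink} in the first case and a \emph{source} in the second (a vertex that is both just means its whole neighbourhood is $\lesc$-equivalent to it, and we may place it in either class). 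So the antihole partitions into sources $S$ and sinks $T$.

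First I would observe that no edge of the antihole can join two sources or two sinks: if $v_i, v_j$ are both sources and $(v_i,v_j)\in E$, then $v_i \lesc v_j$ (since $v_j$ is a neighbour of source $v_i$) and simultaneously $v_j \lesc v_i$, which is fine for a preorder, but then $v_i$ and $v_j$ are $\lesc$-equivalent; pushing this through, any common neighbour relation forces large cliques among the equivalence class, and in an antihole on $2n+1 \geq 5$ vertices one checks this produces a non-edge that must be an edge. More cleanly: I would argue that if $v_i$ is a source and $v_j$ is a sink with $(v_i, v_j) \in E$ then $v_i \lesc v_j$; and if $v_i, v_j$ are both sources with $(v_i,v_j)\in E$ we get $v_i \lesc v_j$ and $v_j \lesc v_i$. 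The key combinatorial point is that the ``edge graph'' of an odd antihole — the complement of an odd cycle $C_{2n+1}$ — is connected and non-bipartite for $n \geq 2$. Hence we cannot two-colour its vertices (by $S$ and $T$) so that every edge is bichromatic, so there must be an edge joining two sources or two sinks, say two sources $v_i, v_j$ with $v_i \equiv v_j$ (mutually $\lesc$).

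Next I would exploit such an equivalent pair $v_i \equiv v_j$ joined by an antihole-edge. Since $\{i,j\}$ are non-consecutive, there is a vertex $v_\ell$ consecutive to exactly one of them, say $(v_i, v_\ell) \notin E$ but $(v_j, v_\ell) \in E$ (here I use $2n+1 \geq 5$ so the two ``missing'' neighbours of $v_i$ and of $v_j$ do not force everything to coincide). From $v_i \equiv v_j$ and $(v_j, v_\ell) \in E$ we get $v_i \lesc v_\ell$ or $v_\ell \lesc v_i$ (whichever direction $v_j$–$v_\ell$ has, transported across the equivalence), hence $(v_i, v_\ell) \in E$ — contradicting that it was a non-edge. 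This is the contradiction that finishes the lemma, and together with Lemma \ref{hole} it completes the proof of Theorem \ref{berge}.

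The step I expect to be the main obstacle is making the ``sources versus sinks cannot be a proper 2-colouring'' argument airtight while simultaneously extracting a \emph{usable} equivalent pair — i.e. handling the degenerate possibility that a vertex is both a source and a sink (its neighbourhood is entirely $\lesc$-equivalent to it), since then the $S/T$ partition is not canonical and one must choose the classes so as to still force a monochromatic edge. I would deal with this by first treating the case where some vertex $v_i$ has \emph{some} neighbour $v_j$ with $v_i \equiv v_j$: then $v_i \equiv v_j$ is the equivalent pair we need (it need not be along an antihole-edge, but the argument of the previous paragraph only used $(v_i,v_j)\in E$, which holds). In the remaining case every vertex is \emph{strictly} a source or strictly a sink, the partition is genuine, the complement of $C_{2n+1}$ being non-bipartite gives a monochromatic edge, and that edge again yields $v_i \equiv v_j$, returning us to the first case — contradiction. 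The only real computation is verifying that $\overline{C_{2n+1}}$ is connected and non-bipartite for $n \geq 2$, which is immediate since $\overline{C_5} \cong C_5$ and for $n \geq 3$ the complement of $C_{2n+1}$ contains a triangle and a spanning connected subgraph.
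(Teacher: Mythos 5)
Your proof is correct, and its skeleton is the same as the paper's: invoke Lemma \ref{first} to label each vertex of the antihole as a source or a sink, and then show that this two-colouring of $\overline{C_{2n+1}}$ cannot be proper. The paper gets the final contradiction by double-counting edge--vertex incidences in the $(2n-2)$-regular induced subgraph (equal numbers of red and blue vertices versus an odd total), whereas you invoke non-bipartiteness of $\overline{C_{2n+1}}$ for $n\geq 2$; these are interchangeable. The substantive difference is your treatment of the degenerate case, and here you have actually identified and repaired a gap in the paper's own argument: the paper asserts that ``any edge is incident on a red and a blue vertex,'' but since $\lesc$ is only a preorder (not antisymmetric), two adjacent sources $v_i, v_j$ with $v_i \lesc v_j \lesc v_i$ are a priori possible, and then that edge is monochromatic and the incidence count breaks. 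Your second paragraph closes exactly this hole: an adjacent equivalent pair $v_i \equiv v_j$ would, by transitivity, be comparable to the same vertices, yet in an antihole on $2n+1 \geq 5$ vertices one finds $v_\ell$ adjacent to $v_j$ but not to $v_i$ (your check that $\{i-1,i+1\} \neq \{j-1,j+1\}$ modulo $2n+1$ is the right one), giving a contradiction. So every edge really is source--sink, and either counting argument finishes. In short: same route, but your version is the airtight one.
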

\begin{proof}
 Let $v_0,v_1, \ldots , v_{2n}$, $v_i \ne v_j \mbox{ for } i \ne j$, be an antihole, so $(v_i, v_{i+1}) \notin E$ and  $(v_{2n}, v_0) \notin E$. Then by lemma \ref{first} for any $v_i$ we have either $v_i \lesc v_j$ for every $j$ such that $(v_i,v_j) \in E$, we color these nodes red, else we have $v_j \lesc v_i$ for all $j$ such that $(v_i,v_j) \in E$ we color such nodes blue. Note that by lemma \ref{first} this is well defined i.e. all the vertices can be labeled by these two colors. Now since the induced subgraph on $v_0,v_1,v_2 , \ldots v_{2n}$ is regular, say of degree be $d$, we have the number of incident edges on red vertices =$d\, \card{ \{v_i \, | \,  v_i \mbox{ is colored red } \}}$ and the number of edges incident on the blue vertices = $d\,\card{\{v_i \, | \, v_i \mbox{ is colored blue } \} }$. Since any edge is incident on a red and a blue vertex we have $\card{ \{v_i \, | \, v_i \mbox{ is colored red } \}}=\card{\{v_i \, | \, v_i \mbox{ is colored blue } \} }$. But that there are odd number of vertices makes this scenario impossible.
\end{proof}

Now the equality $\omega(G)=\chi(G)$ follows from \ref{perfect} and the fact that the graph is perfect since it is Berge by \ref{berge}.

\section{The Second Proof}
In this section we will give a more elementary (that does not depend on the strong perfect graph theorem) argument to prove the main theorem \ref{main}, but first we have to gather a few elementary lemmas.

\begin{lem}\label{cyclic} Given a subgroup $H$ of $G=\mathbb{Z}/n\mathbb{Z}$ any stable coloring on the subgroup extends to a stable coloring on the group $G$.
 
\end{lem}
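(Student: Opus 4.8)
The plan is to forget the group for a moment and work with the combinatorial skeleton of the power graph of $\mathbb{Z}/n\mathbb{Z}$. Writing $d=\card{H}$, so that $H$ is the subgroup of order $d$ and $d\mid n$, recall that $\mathbb{Z}/n\mathbb{Z}$ has a unique subgroup of each order $e\mid n$, that two elements are adjacent in the power graph exactly when the order of one divides the order of the other, and that the elements of a fixed order form a clique. Thus $P_{\mathbb{Z}/n\mathbb{Z}}$ is the ``clique blow--up'' of the divisibility comparability graph on the divisors of $n$: the divisor $e$ becomes a clique $L_e$ of size $\phi(e)$, the cliques $L_e,L_f$ are completely joined when $e\mid f$ or $f\mid e$ and completely non--adjacent otherwise, and the subgroup of order $e$ is the union of the $L_{e'}$ with $e'\mid e$. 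Consequently a proper colouring of $P_{\mathbb{Z}/n\mathbb{Z}}$ is, up to permuting colours within each level, the same as a family $(S_e)_{e\mid n}$ of colour sets with $\card{S_e}=\phi(e)$ and $S_e\cap S_f=\emptyset$ whenever $e\mid f$, $e\neq f$; and since the power graph of a cyclic group is a comparability graph one has $\chi=\omega=\Psi$ on every subgroup, so such a colouring is stable precisely when
\[
\Bigl|\bigcup_{e\mid d'}S_e\Bigr|=\Psi(d')\qquad\text{for every }d'\mid n .
\]
The lemma therefore asserts that any such family indexed by the divisors of $d$ can be completed to one indexed by all divisors of $n$.

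I would then reduce to the case $n=dp$ with $p$ prime, by induction on the integer $n/d$. If $n/d$ is composite, pick any prime $q\mid n/d$; then $d\mid n/q$ and $(n/q)/d<n/d$, so by induction the given stable colouring of $H$ extends to a stable colouring of the subgroup of order $n/q$, and it remains only to transport a stable colouring across the prime--index inclusion of the subgroup of order $n/q$ into $\mathbb{Z}/n\mathbb{Z}$. Composing such steps proves the lemma, since passing from order $n/q$ up to $n$ leaves every $S_e$ with $e\mid n/q$ untouched, so every stability equality already secured remains valid.

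For the prime--index step put $\alpha=v_p(n)$ (the exponent of $p$ in $n$) and $m=n/p^{\alpha}$. The divisors of $n$ not dividing $d$ are exactly the $p^{\alpha}f$ with $f\mid m$, and a short computation with greatest common divisors shows that the neighbours of $L_{p^{\alpha}f}$ lying inside $H$ are precisely the subgroup of order $p^{\alpha-1}f$, whose colour set $B_f:=\bigcup_{e'\mid p^{\alpha-1}f}S_{e'}$ has size $\Psi(p^{\alpha-1}f)$ by stability of the given colouring. I would build the missing sets $S_{p^{\alpha}f}$ by recursion on $f$ along divisibility, at each step taking $S_{p^{\alpha}f}$ of size $\phi(p^{\alpha}f)$, disjoint from $B_f$ and from the already constructed $S_{p^{\alpha}g}$ with $g\mid f$, $g\neq f$, re--using colours already present according to a fixed nesting rule and creating a brand new colour only when no admissible old colour remains. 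Using the recurrence $\Psi(N)=\phi(N)+\max_{q\mid N}\Psi(N/q)$ and the monotonicity of $\Psi$, the two things to verify are: (i) admissible colours never run out, so the recursion does not get stuck; and (ii) the colours occurring on the proper divisors of $p^{\alpha}f$ number exactly $\max_{q\mid p^{\alpha}f}\Psi(p^{\alpha}f/q)$, so that adding the $\phi(p^{\alpha}f)$ disjoint colours of $S_{p^{\alpha}f}$ gives $\bigl|\bigcup_{e'\mid p^{\alpha}f}S_{e'}\bigr|=\Psi(p^{\alpha}f)$, the single new stability equality that needs checking (the old ones persist because no $S_e$ with $e\mid d$ is altered).

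The hard part is point (ii): pinning the union of the colour sets below a new divisor down to its minimal size $\max_{q}\Psi(\,\cdot\,/q)$ rather than something larger forced by an unlucky earlier colouring. This amounts to knowing that the colourings in play are ``nested'' — the colour set of a smaller subgroup always sitting inside that of a larger one — the prototype being the inclusion $S_p\subseteq S_q$ that is already forced in every stable colouring of $\mathbb{Z}/pq\mathbb{Z}$ with $p<q$. I would therefore either strengthen the induction to carry such a nesting property along (so that the recursive choices of $S_{p^{\alpha}f}$ can be made to respect it) or prove once and for all that every stable colouring of a cyclic group enjoys it; granted that, point (i) becomes a routine bookkeeping with the monotonicity of $\Psi$.
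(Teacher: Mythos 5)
Your setup is sound and, at the strategic level, matches the paper's own proof: both arguments reduce to the prime-index step $[G:H]=p$ via a filtration, both view the power graph of $\mathbb{Z}/n\mathbb{Z}$ as a clique blow-up of the divisor lattice with colour sets $S_e$ of size $\phi(e)$, and both aim to colour the new levels by reusing colours from incomparable levels inside $H$ so that only $\Psi(n)-\Psi(n/p)=\chi(G)-\chi(H)$ fresh colours appear. But your proof stops exactly where the lemma's actual content begins. Your point (ii) --- that for every new divisor $p^{\alpha}f$ the union $\bigcup_{e'\mid p^{\alpha}f}S_{e'}$ comes out to exactly $\Psi(p^{\alpha}f)$ rather than something larger --- is not established; you defer it to a ``fixed nesting rule'' that is never specified and to a nesting property of stable colourings that is never proved. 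Moreover, your two escape routes collapse into one: since the lemma quantifies over an \emph{arbitrary} stable colouring of $H$, strengthening the induction hypothesis to carry nesting along does not help unless you also show that every stable colouring of a cyclic group automatically enjoys the nesting your recursion needs, a nontrivial claim you verify only for $\mathbb{Z}/pq\mathbb{Z}$. The difficulty is genuine: the set $S_{p^{\alpha}g}$ must land inside $\bigcup_{e'\mid p^{\alpha-1}f}S_{e'}$ simultaneously for \emph{every} multiple $f$ of $g$ dividing $m$, while avoiding $\bigcup_{e'\mid p^{\alpha-1}g}S_{e'}$ and all previously built $S_{p^{\alpha}g'}$; showing these constraints are simultaneously satisfiable is the whole point of the lemma.

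The paper supplies exactly this missing piece in Lemma \ref{counting}: indexing levels by exponent vectors $(j_1,\ldots,j_r)$, the new level (the one with $j_i=k_i$) reuses the colours of the level $(j_1',\ldots,j_r')$ obtained by lowering the $i$-th coordinate to $k_i-1$ and raising by one the first coordinate $t$ with $j_t\neq k_t$; incomparability of $\underline{j}$ and $\underline{j}'$ guarantees properness, and summing the deficits $\phi(\underline{j})-\phi(\underline{j}')$ together with the $\phi(n)$ colours for the generators of $G$ yields exactly $\Psi(n)-\Psi(n/p)$ new colours. To complete your argument you would need to exhibit an equally concrete reuse rule and check both that it never clashes with adjacent new levels and that it produces the exact unions $\Psi(p^{\alpha}f)$ for all $f\mid m$ at once.
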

\begin{proof}
 Taking a filtration of the group $G$ such as $H < H_1 < H_2 \ldots H_N=G$ such that $[H_{i+1}: H_i]$ is a prime we reduce without loss of generality to the case that the given subgroup $H$ is of index $p$ in $G$ where $p$ is a prime. Let us assume that $n=p_1^{k_1}p_2^{k_2}p_3^{k_3} \ldots p_r^{k_r}$ where $p_1 < p_2 < p_3 \ldots < p_r$ are primes, $k_i >0$ and $p=p_i$ for some $i$. Let us also say that we have a stable coloring on the subgroup $H$. By the lemma \ref{counting} we have a coloring on $G $ with $\chi(G) - \chi(H)$ extra colors, and since we have used $\chi(H)$ colors to color $H$ we have given a coloring on $G$ extending the coloring on $H$ with $\chi(G)=\Psi(n)$ colors (as in proposition \ref{computation}), and thus a minimal coloring on $G$. To show the stability let us take an element $x \in G$, and note that by the assumption that it is a stable coloring on $H$ it restricts to a minimal coloring on $H \cap \langle x \rangle$. Since the process of \ref{counting} restricts identically to $\langle x \rangle \cap H < \langle x \rangle  $, we see that it is the minimal coloring on the subgroup $\langle x \rangle$. Hence the coloring on $G$ is stable.
\end{proof}

\begin{lem} \label{counting} Let $H,G$ be as above, then any coloring on $H$ can be extended to a coloring on $G$ with $\chi(G) - \chi(H)$ colors.
 
\end{lem}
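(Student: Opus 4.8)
The plan is to realise the power graph of $G=\mathbb{Z}/n\mathbb{Z}$ through its preorder description, to identify $G\setminus H$ together with its interface with $H$ purely in terms of element orders, and then to extend the given colouring by an explicit greedy procedure whose fresh‑colour count is controlled by the recursion $\Psi(n)=\phi(n)+\max_{q\mid n}\Psi(n/q)$. Write $n=p_1^{\alpha_1}\cdots p_k^{\alpha_k}$ and $p=p_i$. First I would record the local structure: an element $v\in G$ lies in $G\setminus H$ exactly when $p_i^{\alpha_i}\mid o(v)$, so $G\setminus H$ partitions into the blocks $B_d=\{v:o(v)=d\}$ over divisors $d\mid n$ with $p_i^{\alpha_i}\mid d$. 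Each $B_d$ is a clique of size $\phi(d)$; two blocks $B_d,B_{d'}$ are completely joined precisely when $d\mid d'$ or $d'\mid d$; and inside $H$ the block $B_d$ is joined exactly to the (unique) subgroup of order $d/p$. These facts are routine from the preorder in Section 2. Hence, writing $c_0$ for the given colouring of $H$ with $\Psi(n/p)=\chi(H)$ colours, extending $c_0$ to $G$ amounts to colouring each $B_d$ so that the colouring is proper on $G\setminus H$ and the colour of every $v\in B_d$ avoids $c_0$ on the subgroup of $H$ of order $d/p$.

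I would then colour $G\setminus H$ greedily, processing the blocks $B_d$ in a linear extension of divisibility on $\{d\mid n:p_i^{\alpha_i}\mid d\}$ and giving each vertex the least colour — in a fixed list that exhausts the $\Psi(n/p)$ colours of $c_0$ before any fresh colour — that is not already used on a coloured neighbour. Properness is then automatic, and the point is to prove that at most $\Psi(n)-\Psi(n/p)$ fresh colours appear. I would do this by induction along the divisor order, with the invariant that once every block $B_{d'}$ with $d'\mid d$ has been processed, the subgroup of $G$ of order $d$ carries at most $\Psi(d)$ colours. For $d$ with $p_i^{\alpha_i}\nmid d$ this subgroup lies inside $H$ and the bound is just the statement that $c_0$ restricts to a minimum colouring there; for $p_i^{\alpha_i}\mid d$ it follows from the greedy rule together with $|B_d|=\phi(d)$ and the $\Psi$‑recursion. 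Taking $d=n$ and recalling $\chi(G)=\Psi(n)$ (Theorem \ref{main}) gives that exactly $\Psi(n)-\Psi(n/p)$ fresh colours are used.

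The step I expect to be the main obstacle is precisely this inductive estimate in the presence of an \emph{arbitrary} colouring $c_0$: the number of $c_0$‑classes available for reuse on $B_d$ is $\Psi(n/p)$ minus the number of $c_0$‑classes meeting the subgroup of order $d/p$, and this equals $\Psi(n/p)-\Psi(d/p)$ only when $c_0$ restricts to a minimum colouring on that subgroup. I therefore plan to prove first the auxiliary fact that a minimum colouring of the power graph of a cyclic group restricts to a minimum colouring on every subgroup. In the language of Section 3 this says that a minimum antichain cover of the blown‑up divisor lattice is stable on every principal ideal; I would establish it by exploiting that this poset is a bounded lattice, which excludes the only configuration (``two parallel chains above a point, together with a disjoint chain'') through which such stability can fail. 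Note that the application, Lemma \ref{cyclic}, feeds a \emph{stable} $c_0$ into Lemma \ref{counting}, so even this restricted form of the present lemma is enough there.

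Finally I would observe that the whole construction is local: intersecting everything with a cyclic subgroup $\langle x\rangle$ of $G$ turns it into the analogous greedy run on $\langle x\rangle$ equipped with the induced colouring, since $B_d\cap\langle x\rangle$, its coloured neighbours, and the relevant subgroup $\langle x\rangle\cap H$ are all computed from the orders of elements of $\langle x\rangle$ alone. This is exactly the ``the process of \ref{counting} restricts identically'' property invoked in the proof of Lemma \ref{cyclic}, so with the stability‑of‑minimum‑colourings lemma in hand the argument closes.
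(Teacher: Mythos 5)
Your structural set-up (the blocks $B_d$, their mutual adjacencies, and the identification of the $H$-interface of $B_d$ with the subgroup of order $d/p$) agrees with the paper's, but the colouring mechanism does not, and the difficulty you flag is not the real one: the inductive invariant at the heart of your count fails for the greedy rule as stated, even when $c_0$ is stable. Concretely, take $n=pqs$ and $H$ of index $p$, and suppose $c_0$ happens to use, on the $q$-clique $C_q\subset H$, colours sitting at the top of your fixed list. The block $B_p$ is processed with only the identity as a coloured neighbour, so greedy assigns it the bottom $p-1$ colours of the list, disjoint from the colours on $C_q$. Since $B_{pq}$ is adjacent to all of $C_q\cup B_p\cup\{e\}$, the subgroup of order $pq$ then receives $1+(p-1)+(q-1)+\phi(pq)=pq>\Psi(pq)=pq-\min(p,q)+1$ colours, so the invariant ``the subgroup of order $d$ carries at most $\Psi(d)$ colours'' already fails at $d=pq$. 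The underlying problem is that your induction needs the \emph{union} of the maximal subgroups of $C_d$ to carry at most $\max_{q\mid d}\Psi(d/q)$ colours, whereas the hypothesis only bounds each of them separately by $\Psi(d/q)$; ``least colour in a fixed list'' does nothing to align the colour sets of incomparable blocks, and the auxiliary stability lemma you propose for $c_0$ cannot repair this, because the misalignment is created on the wall $G\setminus H$, not inside $H$.

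The paper avoids this by prescribing, for each wall block indexed by $\underline{j}$, a \emph{specific} incomparable block $\underline{j}'$ inside $H$ whose colours are to be reused (set $j_i'=k_i-1$ and bump the least non-full coordinate by one); this $\underline{j}'$ dominates every $H$-neighbour of $\underline{j}$, and since each block of $H$ is a clique it carries exactly $\phi(\cdot)$ colours under \emph{any} proper colouring, so the extra-colour count becomes a $c_0$-independent arithmetic identity summing to $\Psi(n)-\Psi(n/p)$. That is also why the paper can assert the lemma for an arbitrary colouring of $H$, whereas your route, even with the invariant repaired, would only deliver the stable case. To salvage your approach you would have to replace ``least colour in a fixed list'' by a rule that tells each wall block \emph{which} previously used colours to take --- which is essentially the paper's map $\underline{j}\mapsto\underline{j}'$.
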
 
\begin{proof}
Let us assume that $n=p_1^{k_1}p_2^{k_2} \ldots p_r^{k_r}$ where $p=p_i$ following the notation of the above lemma. The subgroup generated by the element $g^{p_1^{k_1-j_1}p_2^{k_2-j_2}p_3^{k_3-j_3} \ldots p_r^{k_r-j_r}}$ uniquely corresponds to the vector $(j_1,j_2,j_3, \ldots ,j_r)$ where $G=\langle g \rangle $. Such subgroups are generated by $\phi(p_1^{j_1}p_2^{j_2}\ldots p_r^{j_r})$ generators which form a clique of that size. 
In this way $(0,0,\ldots, 0)$ corresponds to the trivial subgroup, $(k_1,k_2,\ldots, k_r)$ corresponds to $G$, and $(k_1,k_2, \ldots, k_i-1, \ldots, k_r)$ corresponds to the subgroup $H$. Note also that $(j_1,j_2, \ldots, j_r) \leq (j_1',j_2', \ldots, j_r')$ corresponds to the inclusion of subgroups.

The elements in $G \setminus H$ are precisely those generating a subgroup corresponding to $(j_1,j_2, \ldots, j_r)$ where $j_i =k_i$, that is $(j_1,j_2, \ldots, j_r)$ on the $j_i=k_i$ wall.  We color these by reusing the colors on the points $(j_1',j_2' , \ldots, j_r')$ inside $H$, given by the map $(j_1,j_2, \ldots, j_r) \longmapsto (j_1', j_2' \ldots j_r')$, where $j_i' \coloneqq k_i -1$ and for $t \neq i$, $j_t' \coloneqq j_t+1$ for $t=\mathrm{ min \, }\{u \, | \, j_u \neq k_u \}$,  $j_t'\coloneqq j_t$ otherwise. Clearly $\underline{j}$ and $\underline{j'}$ are not comparable. This works for $(j_1,j_2, \ldots, j_r) \neq (k_1, k_2, \ldots, k_r)$ for which we will need $\phi(p_1^{j_1}p_2^{j_2}\ldots, p_r^{j_r})- \phi(p_1^{j_1'}p_2^{j_2'}\ldots, p_r^{j_r'})$ extra colors at most; and $\phi(p_1^{k_1}p_2^{k_2} \ldots, p_r^{k_r})$ colors for the generators of $G$. From the formula of $\Psi(n)=\omega(G)$ \, (see \ref{computation}) it is clear that we will be using exactly $\Psi(n) - \Psi(n/p)= \chi(G) - \chi (H)$ extra colors.   
\end{proof}

\begin{thm}\label{main2}
 For any finite group $G$, $\chi(G)=\omega(G)$.
\end{thm}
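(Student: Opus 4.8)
The plan is to prove the inequality $\chi(G)\le\omega(G)$ (the reverse is automatic) by explicitly constructing, for an arbitrary finite group $G$, a proper colouring of the power graph $P_G$ with exactly $\omega(G)$ colours. First I would record the elementary observation that every clique of $P_G$ is a chain for the preorder $\preccurlyeq$, hence is contained in $\langle g\rangle$ for any top element $g$ of the chain and therefore inside some maximal cyclic subgroup; writing $C_1,\dots,C_m$ for the maximal cyclic subgroups of $G$, and using that $\Psi$ is monotone under divisibility, this gives $\omega(G)=\max_j\Psi(|C_j|)=\max_{g\in G}\Psi(o(g))=:N$.

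First I would treat the cyclic case $G=\mathbb{Z}/n\mathbb{Z}$, by induction on $n$. The base case $n=1$ is trivial; for the step, pick a prime $p\mid n$ and the subgroup $H$ of index $p$, which by induction (and by Lemma \ref{cyclic}, starting from the trivial subgroup and its one-colour stable colouring) carries a stable colouring with $\chi(H)=\Psi(n/p)$ colours, and then the construction in Lemma \ref{counting} extends it to a colouring of $\mathbb{Z}/n\mathbb{Z}$ with $\Psi(n/p)+(\Psi(n)-\Psi(n/p))=\Psi(n)$ colours; since $\Psi(n)=\omega(\mathbb{Z}/n\mathbb{Z})$ by Proposition \ref{computation} this is optimal, so $\chi=\omega$ for cyclic groups and, moreover, every cyclic group admits a stable colouring with exactly $\chi$ colours.

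Next I would handle a general $G$ by pasting, processing $C_1,\dots,C_m$ one at a time and maintaining the invariant that the partial colouring already constructed on $G_i:=C_1\cup\cdots\cup C_i$ restricts to a stable colouring on each $C_j$ with $j\le i$, using only colours from $\{1,\dots,N\}$. Colour $C_1$ stably (this uses $\Psi(|C_1|)\le N$ colours). For the inductive step I would use two structural facts. The first is that a vertex $v\in C_i\setminus G_{i-1}$ has no $P_G$-neighbour in $G_{i-1}\setminus C_i$: if $v\sim w$ then $v\in\langle w\rangle$ or $w\in\langle v\rangle$, and either containment drags one of the two into the other's maximal cyclic subgroup, a contradiction; so it suffices to recolour the vertices of $C_i\setminus G_{i-1}$ compatibly with the colours already sitting on $C_i\cap G_{i-1}$. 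The second is that $C_i\cap G_{i-1}=\bigcup_{d\in A}L_d$ is a divisor-closed union of ``order levels'' $L_d=\{x\in C_i:o(x)=d\}$ (because $x\in C_i\cap C_j$ forces $\langle x\rangle$, hence every $L_{d'}$ with $d'\mid o(x)$, into $C_j$), and the inherited colouring gives each such $L_d$ a set of exactly $\phi(d)$ colours, pairwise disjoint over comparable levels. The remaining step is a relative version of Lemma \ref{counting}: such a partial colouring of the down-set $\bigcup_{d\in A}L_d$ extends to a stable colouring of all of $C_i$ with at most $\chi(C_i)=\Psi(|C_i|)\le N$ colours, by inserting the missing levels in divisor order and reusing colours from below exactly as in the proof of Lemma \ref{counting}. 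Running this for $i=1,\dots,m$ yields an $N$-colouring of $P_G$, whence $\chi(G)\le N=\omega(G)$.

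The hard part will be this relative extension: one must verify that the colours the invariant has already deposited on $C_i\cap G_{i-1}$ — possibly assembled from several earlier $C_j$'s — really do extend to a stable colouring of $C_i$ inside the fixed $N$-colour palette, i.e. that the earlier choices have not ``wasted'' the budget by using disjoint colours on incomparable levels. I would address this either by canonicalising the stable colourings used on cyclic groups so that they agree on overlaps $C_i\cap C_j$ up to a permutation of colours, or, more robustly, by strengthening Lemma \ref{counting} to the statement that any stable colouring of a divisor-closed union of levels of $\mathbb{Z}/n\mathbb{Z}$ extends to a stable colouring of $\mathbb{Z}/n\mathbb{Z}$ without introducing colours beyond the $\Psi(n)$ already dictated. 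Everything else is routine bookkeeping with the recurrence $\Psi(n)=\phi(n)+\max_{p\mid n}\Psi(n/p)$.
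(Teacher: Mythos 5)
Your overall strategy --- settle the cyclic case via Proposition \ref{computation} and Lemmas \ref{cyclic}, \ref{counting}, then build up $G$ as a union of cyclic subgroups, using the observation that a new vertex of $C_i$ has no neighbour in the already-coloured part outside $C_i$ --- is essentially the paper's proof, which adjoins one cyclic subgroup $\langle h\rangle$ at a time to an upward-closed set $S_i$ and patches colourings across the overlap. Where you diverge is in the treatment of that overlap: you correctly note that $C_i\cap G_{i-1}$ is in general only a divisor-closed \emph{union} of levels (equivalently, a union of subgroups of $C_i$), not a single subgroup, so the extension lemmas do not apply verbatim and a ``relative'' version is needed; the paper instead asserts that $S_i\cap\langle h\rangle$, being upward closed, is a subgroup of $\langle h\rangle$, and proceeds from there.

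However, the step you label ``the hard part'' is a genuine gap, and the strengthened Lemma \ref{counting} you propose as a remedy is false as stated. Take $C_i=\mathbb{Z}/6\mathbb{Z}$, with levels $L_1=\{0\}$, $L_2=\{3\}$, $L_3=\{2,4\}$, $L_6=\{1,5\}$ and $\Psi(6)=5$. The partial colouring $0\mapsto 1$, $3\mapsto 2$, $2\mapsto 3$, $4\mapsto 4$ of the divisor-closed union $L_1\cup L_2\cup L_3$ satisfies your invariant (each $L_d$ receives exactly $\phi(d)$ colours, disjoint across comparable levels) and restricts to a minimal colouring of every cyclic subgroup it contains, yet it cannot be extended to a $5$-colouring of $\mathbb{Z}/6\mathbb{Z}$: the two generators in $L_6$ are adjacent to all four coloured vertices and to each other, forcing a sixth colour. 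Such a configuration can genuinely arise when $L_2$ and $L_3$ are inherited from two distinct earlier maximal cyclic subgroups $C_j$ and $C_{j'}$, since nothing in your invariant couples the colours placed on incomparable levels coming from different pieces. So you must actually carry out the canonicalisation you allude to (for instance, fix in advance a single assignment of a $\phi(d)$-set of colours to each possible element order $d$ in $G$, arranged so that incomparable orders reuse colours exactly as in the proof of Lemma \ref{counting}, and insist that every cyclic piece is coloured by this one scheme), or else strengthen the inductive hypothesis to control colour reuse across incomparable levels; without one of these the induction does not close. Note that this is precisely the point at which the paper's own argument is at its thinnest, so the difficulty you isolated is the real one.
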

\begin{proof}
Choose $g \in G$ such that $\Psi(o(g))$ is maximum and denote the maximum number by $\mu$. Since we know that $\omega(G) \leq \chi(G)$ it will be sufficient to show a coloring with $\mu$ colors. We show this by inductively constructing subsets $S_0 \subset S_1 \subset S_2 \subset \cdots \subset S_m= G$  such that each $S_i$ is upward closed in the preorder, with a weakly stable coloring $c_i$ on $S_i$ using $\mu$ colors; i.e. for any $h \in S_i$, $c_i |_{\langle h \rangle}$ is a minimal coloring on $\langle h \rangle $. Choose $S_0= \langle g \rangle $ and color $S_o$ with $\mu$ colors satisfying the above requirements using the lemma \ref{cyclic}. 
Having constructed $S_i$, if $S_i \neq G$, construct $S_{i+1}$ by choosing $h \in G \setminus S_i$ and setting $S_{i+1}= S_i \cup \langle h \rangle$. Note that $S_i \subsetneq S_{i+1}$ and $S_{i+1} $ is upward closed by construction. Now $S_i \cap \langle h \rangle $ is upward closed, hence it is a subgroup of $\langle h \rangle $, let it be generated by $h '$. Now as by assumption $c_i|_{ \langle h' \rangle}$ is stable, by the lemma \ref{cyclic} we can extend it to a stable coloring $c'$ on $\langle h \rangle$. Since $c'|_{\langle h' \rangle}=c_i|_{\langle h' \rangle}$ they patch up to give a labeling on all of $S_{i+1}$, which is in fact a coloring since there are no edges between $S_i \setminus \langle h \rangle $ and $\langle h \rangle \setminus S_i$. Further, this uses $\chi(\langle h \rangle ) - \chi( \langle h' \rangle)$ extra colors, so we can relabel $c'$ by reusing the $\mu - \chi(\langle h' \rangle)$ colors from $S_i \setminus \langle h' \rangle$, on the part $\langle h \rangle \setminus S_i$ to get a coloring $c_{i+1}$ on $S_{i+1}$ using $\mu$ colors ($\chi(\langle h \rangle) \leq \mu$ by assumption). Finally $c_{i+1} $ is stable as, for $x \in S_{i+1}$, $c_{i+1}$ restricts to a minimal coloring on $\langle x \rangle$: inductively if $x \in S_i$, and by construction if $x \in \langle h \rangle$ since $c_{i+1}|_{\langle h \rangle} $ is a relabeling of the stable coloring $c'$.
\end{proof}

\begin{rem}
 In fact the process above yields a weakly stable coloring on $G$ (cf. \ref{stable}).
\end{rem}

It appears that one can reformulate this argument using elementary homomorphisms for a preordered set, maintaining stability on its ideals.
\bibliographystyle{abbrv}
\bibliography{beck}
\end{document}